\documentclass[10 pt]{amsart}
\usepackage{latexsym, amsmath, amssymb, longtable, booktabs,amscd,microtype,booktabs,cases}
\usepackage[square]{natbib}
\usepackage[english]{babel}
\usepackage[utf8x]{inputenc}
\usepackage{hyperref}
\bibpunct{[}{]}{,}{n}{}{;}

\usepackage[english]{babel}
\usepackage[utf8x]{inputenc}
\bibliographystyle{abbrvnat}
\usepackage{graphicx}
\usepackage{cleveref}

\usepackage[OT2,T1]{fontenc}
\DeclareSymbolFont{cyrletters}{OT2}{wncyr}{m}{n}
\DeclareMathSymbol{\Sha}{\mathalpha}{cyrletters}{"58}
\numberwithin{equation}{section}

\textheight=8.21in
\textwidth=6.25in
\oddsidemargin=.25in
\evensidemargin=.25in
\topmargin=0in
\headheight=.1in
\headsep=.5in
\footskip=.75in

\newtheorem*{deff}{Definition}
\newtheorem*{thh}{Proposition}
\newtheorem*{thhno}{Theorem}

\usepackage[switch,columnwise]{lineno}
\usepackage{lipsum} 

\usepackage{fancyhdr,lipsum}
\pagestyle{fancy}
\fancyhf{}%
\fancyhead[C]{%
  \ifodd\value{page}\relax
     There Are No Odd Perfect Numbers\\
  \else HOOSHANG SAEID--NIA  %
  \fi}

\date{\today}

\begin{document}

\author{HOOSHANG SAEID--NIA}
\address{HOOSHANG SAEID--NIA
%\newline
\ \ }
\email{hooshang.s.nia@gmail.com   - - -   Social Page: www.instagram.com/h.s.nia}

\title{
There Are No Odd Perfect Numbers\\
}

\smallskip
In Memory of Maryam Mirzakhani (1977-2017)

\begin{abstract}
While the general form of even perfect numbers is well-known, the exist\-ence or non-exist\-ence  
of odd perfect numbers is still an open problem. We address this problem and prove that if a natural number is odd,
then it's not perfect.
\end{abstract}

\subjclass[2010]{Primary 11N25, Secondary 11Y50.}
\keywords{perfect numbers, Mersenne primes}
\maketitle
\section{introduction}
The following is one of the ancient open problems in number theory, perhaps \cite{r1} the oldest open problem in all mathematics,
at the time of writing:

\begin{deff}(Perfect Number)
A natural number $n$ is said to be perfect if the
sum of all its [positive] divisors, including $n$ itself, is equal to $2n$.
\begin{equation}
\sum_{d| n}d=2n \mbox{, equivalently: } \sum_{\substack{d| n \\ d < n}}d=n.
\label{M}
\end{equation}
\end{deff}

{\bf Example:} 6, 28, 496, and 8128 are the first few perfect numbers. $\lhd$

All even perfect numbers are completely determined \cite{r1} by the following theorem:

\begin{thhno}{Even Perfect Numbers}\\
{\bf A)} Euclid (300 B.C.) if $2^p-1$ is prime, then $n = 2^{p-1}(2^p-1)$ is a perfect number. (Elements, Book IX, Proposition 36, as cited in \cite{r1}). \\
{\bf B)} Euler (1707 -- 1783) If $n$ is even, then the converse of part (A) is also true. \cite{r1} i.e. even perfect numbers 
{\it *must*} be of the form given by Euclid in part (A).
\end{thhno}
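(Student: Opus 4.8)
The plan is to prove both directions from a single tool: the multiplicativity of the sum-of-divisors function $\sigma(n)=\sum_{d\mid n}d$, i.e.\ $\sigma(ab)=\sigma(a)\sigma(b)$ whenever $\gcd(a,b)=1$, together with the elementary identity $\sigma(2^{k})=1+2+\cdots+2^{k}=2^{k+1}-1$. I will also record the (standard, but not strictly needed below) fact that if $2^{p}-1$ is prime then $p$ itself must be prime, so the exponents occurring are automatically prime.

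For part (A), suppose $q:=2^{p}-1$ is prime and set $n=2^{p-1}q$. Since $q$ is odd, $2^{p-1}$ and $q$ are coprime, so
\[
\sigma(n)=\sigma(2^{p-1})\,\sigma(q)=(2^{p}-1)(q+1)=(2^{p}-1)\cdot 2^{p}=2\cdot 2^{p-1}(2^{p}-1)=2n,
\]
and therefore $n$ is perfect by \eqref{M}. This direction is immediate once multiplicativity is in hand.

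For part (B), let $n$ be an even perfect number and write $n=2^{p-1}m$ with $p\ge 2$ and $m$ odd. Multiplicativity gives $\sigma(n)=(2^{p}-1)\,\sigma(m)$, while perfection gives $\sigma(n)=2n=2^{p}m$, so $(2^{p}-1)\,\sigma(m)=2^{p}m$. Because $2^{p}-1$ is odd, it must divide $m$; write $m=(2^{p}-1)M$. Substituting and cancelling $2^{p}-1$ yields $\sigma(m)=2^{p}M=m+M$. Now $m$ and $M$ are both divisors of $m$, and $M<m$ since $2^{p}-1\ge 3>1$; the identity $\sigma(m)=m+M$ then says that just these two divisors already exhaust the divisor sum of $m$. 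Hence $m$ has no divisors other than $1$ and itself, forcing $M=1$ and $m=2^{p}-1$ prime. Consequently $n=2^{p-1}(2^{p}-1)$ with $2^{p}-1$ a Mersenne prime, which is exactly the shape produced by part (A).

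The argument is essentially self-contained, the only genuinely external ingredient being the multiplicativity of $\sigma$, which is routine. I expect the one step needing a little care to be the final deduction in part (B): extracting ``$m$ is prime and $M=1$'' from ``$\sigma(m)=m+M$ with $M\mid m$ and $M<m$.'' Everything else is bookkeeping with geometric sums and coprimality.
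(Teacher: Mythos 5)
Your proof is correct, and it is the standard Euclid--Euler argument: multiplicativity of $\sigma$ plus $\sigma(2^{k})=2^{k+1}-1$ for part (A), and for part (B) the extraction of $2^{p}-1\mid m$, the identity $\sigma(m)=m+M$, and the conclusion that $m$ has no divisors besides $M=1$ and $m$ itself. Note that the paper does not prove this theorem at all --- it only states it with a citation to \cite{r1} --- so your write-up supplies exactly the argument the cited source gives; the one step worth spelling out a little more is the final deduction, namely that since $m$ and $M$ are distinct divisors of $m$ (distinct because $2^{p}-1\ge 3$ forces $M<m$) and already sum to $\sigma(m)$, the divisor $1$ must coincide with $M$, whence $m=2^{p}-1$ is prime.
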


The ancient open problem is whether or not any {\it odd} perfect numbers exist? We answer that question, 
as already suggested by many authors \cite{r3}, negatively; using only elementary tools.

A good account of previous work on this topic can be found, for example in \cite{r2} and \cite{r3}.

\section{Preliminaries}

It's understood that 1 is not a perfect number. The sum of divisors of 1 is 1, which is not two times 1. Therefore, when we look for odd perfect numbers, $n > 1$, and it can be uniquely factorized as

\begin{displaymath}
n=\prod_{i=1}^{m} p_i^{a_i}   \mbox{\quad (} p_i \mbox{ odd prime)}
 \mbox{ (} a_i \mbox{ positive).}
\end{displaymath}

The sum of divisors of $x$ - usually denoted by $\sigma(x)$ - for a prime power is:

\begin{displaymath}
\sigma(p^a) =\sum_{j=0}^{a}p^{j},
\end{displaymath}

and because $\sigma$ is a multiplicative function (\cite{r1}), for every $n$ in general, it can be written as:

\begin{equation}
\sigma(n) = \prod_{i=1}^{m} (\sum_{j=0}^{a_i}p_i^{j}),
\label{sigm}
\end{equation}

{\bf Example:} For $n = 3^4$ we have,

\begin{displaymath}
\sigma(n) = \sigma(3^4) = \sum_{j=0}^{4}3^{j} = 1 + 3 + 3^2 + 3^3 + 3^4 = 121,
\end{displaymath}

and in other forms,

\begin{displaymath}
\sigma(3^4) = 1 + 3\sigma(3^{4 - 1}) = \sigma(3^{4 - 1}) + 3^4 = \frac{3^{4+1}-1}{3-1} = 121.
\end{displaymath}

as partial sum of a geometric series. $\lhd$

Therefore, using (\ref{M}), our main equation is

\begin{displaymath}
\prod_{i=1}^{m}  (\sum_{j=0}^{a_i}p_i^{j})=
2\prod_{i=1}^{m} p_i^{a_i}
\end{displaymath}

or, equivalently,

\begin{equation}
\prod_{i=1}^{m} \sigma(p_i^{a_i})=
2\prod_{i=1}^{m} p_i^{a_i}
\mbox{\quad (} p_i \not =2, a_i > 0, m \geq 1 \mbox{).}
\label{E1}
\end{equation}

To solve the problem we have to exhibit a literal odd number with this property, or prove that it's impossible.

If $\sigma(n) < 2n$, we say $n$ is {\bf deficient}.
Every number with $m=1$, that is every prime power, is deficient.  Every {\it odd} number with two distinct primes is also known to be deficient (See \cite{r2}, Nocco's Theorem). If $\sigma(n) > 2n$, we say $n$ is {\bf abundant}.

For $m \geq 3$, looking back at \ref{E1}, we 
note that only one $\sigma(p_i^{a_i})$ must be even, 
and of the form $2S$, with odd $S$. The corresponding
$p_i$ is usually called the "special prime". So we can give it a special index, like, $i=s$ ($p_s$ is the special prime).
Moreover, to make it possible to balance the
equation \ref{E1}, with the above conditions, $a_s$ must be odd. Further 
investigation reveals \cite{r1} that, whatever $a_s$ might be, $p_s + 1$ divides $\sigma(p_s^{a_s})$;
hence $\frac{p_s + 1}{2}$ divides $n$. Put another way, there is at least one $p_i < p_s$.

Furthermore, it should be obvious that every $a_i$, other than $a_s$ must be even.

Finally, the last result we need from \cite{r1} is that, if $n$ is perfect, every divisor $d$ of $n$ would
be deficient.

\section{Odd perfect numbers don't exist}

\begin{thh}
If a natural number is odd, then it's not a perfect number.
\label{T1}
\end{thh}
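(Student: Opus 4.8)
The plan is to work directly with the main equation \eqref{E1}, first using elementary congruences to force a hypothetical odd perfect number into a rigid shape, and then to attack the resulting Diophantine constraints by a descent on prime factors.

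Step one is a $2$-adic analysis of \eqref{E1}. Since every $p_i$ is odd, the right-hand side is exactly $\equiv 2 \pmod 4$, so $\sigma(n) = \prod_i \sigma(p_i^{a_i})$ has $2$-adic valuation $1$. By \eqref{prem}, $\sigma(p_i^{a_i}) = 1 + p_i + \cdots + p_i^{a_i}$ is a sum of $a_i + 1$ odd terms, hence is odd exactly when $a_i$ is even; therefore exactly one exponent is odd — call it $\alpha$, attached to a prime $q$ — and for that factor $\sigma(q^\alpha) \equiv 2 \pmod 4$. Reducing $1 + q + \cdots + q^\alpha$ modulo $4$ separately in the cases $q \equiv 1$ and $q \equiv 3 \pmod 4$ shows the second case gives $\sigma(q^\alpha) \equiv 0 \pmod 4$, a contradiction, while the first forces $\alpha \equiv 1 \pmod 4$. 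Collecting the remaining prime powers (all with even exponent) into a single square $m^2$ yields $n = q^\alpha m^2$ with $q$ prime, $\gcd(q,m)=1$, and $q \equiv \alpha \equiv 1 \pmod 4$.

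Step two exploits coprimality. Since $\sigma(q^\alpha) \equiv 1 \pmod q$, it is prime to $q$, so from $\sigma(q^\alpha)\,\sigma(m^2) = 2 q^\alpha m^2$ we get $q^\alpha \mid \sigma(m^2)$; writing $\sigma(m^2) = q^\alpha t$ the equation becomes $\sigma(q^\alpha)\, t = 2 m^2$, with $\sigma(q^\alpha) = (q^{\alpha+1}-1)/(q-1)$. The intended finish is a descent: factor $\sigma(q^\alpha) = \prod_{d \mid \alpha+1,\, d>1}\Phi_d(q)$, observe that its odd prime divisors must divide $m$ and satisfy strong congruences (divisors of $\Phi_d(q)$ are $\equiv 1 \pmod d$ up to a controlled exception), apply the same $\sigma$-analysis to those new primes of $m$, and iterate, while simultaneously tracking the abundancy $\sigma(n)/n$; the hope is that this process cannot consistently return the exact value $2$. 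An alternative entry point would be to expand \eqref{E1} using the recursion $\sigma(p_i^{a_i}) = p_i\,\sigma(p_i^{a_i-1}) + 1$ from \eqref{prem}, though that expansion becomes unwieldy quickly.

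The step I expect to be the real obstacle is closing that descent. The abundancy index $\sigma(m^2)/m^2$ can be pushed arbitrarily close to $2$ from below by admitting many small odd primes — the product $\prod_p p/(p-1)$ diverges — so no crude size inequality forces a contradiction, and the congruence restrictions on the forced prime divisors, while stringent, are not known to be jointly impossible; this is precisely where every previous attack on the problem has stalled. A successful proof must therefore supply a genuinely new device that controls the multiplicative size of $n$ and the arithmetic of the iterated $\sigma$-preimages at the same time, and I would expect the overwhelming majority of the real work — and the greatest danger of a concealed gap — to reside in that single step.
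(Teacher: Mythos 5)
Your first two steps are sound and classical: the $2$-adic analysis of \eqref{E1} showing exactly one $\sigma(p_i^{a_i})$ is even, the reduction to the Euler form $n=q^{\alpha}m^{2}$ with $q\equiv\alpha\equiv 1\pmod 4$, and the divisibility $q^{\alpha}\mid\sigma(m^{2})$ obtained from coprimality are all correct. But the proposal stops being a proof exactly where you say it does. The ``descent'' is a programme, not an argument: you never exhibit a quantity that strictly decreases, or a congruence system that becomes unsatisfiable, when you pass from the prime divisors of the cyclotomic factors $\Phi_d(q)$ of $\sigma(q^{\alpha})$ to the new prime divisors of $m$ and iterate. The congruence restrictions (prime divisors of $\Phi_d(q)$ are $\equiv 1\pmod d$ apart from a controlled exception) and the abundancy bookkeeping are jointly consistent with infinitely many candidate configurations, and, as you note yourself, no crude size inequality closes the loop since $\prod_p p/(p-1)$ diverges. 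The concluding sentence ``this process cannot consistently return the exact value $2$'' is therefore a hope, not a deduction; nothing in the write-up rules out that it can. As it stands, the statement is not proved by your argument, and the missing step is precisely the substantive one.

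For comparison, the paper's own argument takes a different route entirely: it never passes through the Euler form, but instead regroups the divisors of $n$ into three sums (equation \ref{E3.6}), combines this with $\sigma(p_1^{a_1})=p_1\sigma(p_1^{a_1-1})+1$ from \eqref{prem} and with \ref{E3.7}, and tries to extract a contradiction by comparing $1+\prod_{i\ge 2}p_i\sigma(p_i^{a_i-1})$ with $\prod_{i\ge 2}[1+p_i\sigma(p_i^{a_i-1})]$. That route does not supply the device your outline is missing, so it cannot be imported to repair your gap; you would need an independent mechanism that simultaneously controls the multiplicative size of $n$ and the arithmetic of the iterated $\sigma$-preimages, which neither your sketch nor any standard tool currently provides.
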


\begin{proof}
We argue by contradiction. Let $n > 1$ be an odd perfect number and assume, on the contrary to the statement above, that $n$ is perfect. Hence, \ref{E1} holds.

We arrange the primes in ascending order,

\begin{equation}
p_1 < p_2 < \cdots < p_m
\end{equation}

relabeling the primes if necessary. Of course $p_s$ (the special prime) apears in this chain of inequalities. We
know that $p_s \not = p_1$, but we make no assumptions about its exact place.

We also have the list of divisors of $n$ in descending order,

\begin{equation}
n > p_1^{a_1 - 1} \prod_{i=2}^m p_i^{a_i} > \cdots > p_1p_2 > \cdots > p_1 > 1.
\end{equation}

We define,

\begin{equation}
\lambda = p_1^{a_1 - 1} \prod_{i=2}^m p_i^{a_i} = \frac{n}{p_1},
\end{equation}

and we note that (A) $\lambda$ is the greatest [proper] divisor of n, and (B) $a_1 - 1 \not = 0$, because $a_1$ is even.

Finally, we divide $n - 1$ by $\lambda$, and keep dividing the remainder by the next greatest divisor of $n$. The
process will be described in detail.

{\bf Note.} 
We wish to mention that the reader is already familiar with this idea. (In
calculating the greates common divisor of two numbers, say, or converting a number from base 10 to base $b$, etc.
The general idea is clear, but the difference is in several details, like "keep dividing the remainder" or "keep dividing the quotient" etc.) $\lhd$

Now, we write:

\begin{equation}
n - 1 = x \lambda + r  \quad (0 \leq r < \lambda),
\end{equation}

and solve for $x$. Of course, $n = p_1 \lambda = (p_1 - 1) \lambda + \lambda$, implies that,

\begin{equation}
n - 1 = (p_1 - 1) \lambda + (\lambda - 1),  \quad  (x = p_1 - 1) \ \ (r = \lambda - 1).
\end{equation}

(The division algorithm is carried out correctly.)

Next, we divide {\it the remainder} by $\frac{\lambda}{p_1} = \lambda_2$ which must be the next largest divisor of $n$. We obtain,

\begin{equation}
n - 1 = (p_1 - 1) \lambda + (p_1 - 1) \lambda_2 + (\lambda_2 - 1).
\end{equation}

After several more steps, $a_1$-steps to be precise, 
the remainder will be $\frac{n}{p_1^{a_1}} - 1$, that is, the $p_1^{a_1}$ factor is vanished; and the
way we arranged the primes, from smallest to largest, we are sure that the next greatest divisor of $n$ is 
$p_2^{a_2-1}\prod_{i=3}^m p_i^{a_i}$, which goes into the last remainder $p_2 - 1$ times. This
procedure goes on, until it reaches to the last two steps: $p_m^2 - 1 = (p_m - 1)p_m + (p_m - 1)$ and finally,
$p_m - 1 = 1 \cdot (p_m - 1) + 0$. It takes $\sum_{i=1}^m a_i$ steps to finish. 
(We'll demonstrate this with a numerical example, shortly.)
The result looks like,

\begin{equation}
n - 1 = (p_1 - 1) \lambda + (p_1 - 1) \lambda_2 + \cdots + (p_i - 1)\lambda_j + \cdots + (p_m - 1)p_m + (p_m - 1).
\end{equation}

{\bf Note.} 
What we've accomplished so far, is simply, converting the number $n - 1$ to the {\it mixed base} $p_1, p_2,
..., p_m$. We think of the quotients $p_1 - 1, p_2 - 1, \cdots, p_m - 1$ as  {\it digits}, calculated
in a {\bf top-down} process. (But it's
not necessary for the reader to adopt this point of view, as long as she agrees that the quotients are
unique, and therefore the representation of $n - 1$ as above, is well defined.)

Now, in the usual algorithm for converting a number, for instance 523, to any base, let's say base 10, we first
calculate the right-most digit of the result as $(523 \mod 10) = 3$. (This number is already written in base 10,
but we are investigating how it was obtained.) If we call this the {\bf bottom-up} process, we can define 
a top-down process, too, in which the  {\it left-most} (i.e. the  {\bf most significant}) digit is calculated first.
We start by observing that $10^2 < 523 < 10^3$ the we solve $523 = x 10^2 + r$, hence $x=5$.
In this case, $10^2$ is the greatest power of 10 which goes into 523.

In the above argument, we knew for sure that $\lambda$ was the greatest divisor of $n$, and we
knew in each step, what was exactly the next greatest divisor. In other words, in a concise manner,
we just converted $n - 1$ to a mixed base.
$\lhd$

{\bf Example.} Perform the above algorithm for $n = 2205$.

{\it solution.} We have $n = 3^2 \cdot 5 \cdot 7^2$, in that order of primes. Then,

\begin{displaymath}
2205 = 2 (3^1 \cdot 5 \cdot 7^2) + (3^1 \cdot 5 \cdot 7^2),
\end{displaymath}

\begin{displaymath}
2205 = 2 (3^1 \cdot 5 \cdot 7^2) + 2 (5 \cdot 7^2) + (5 \cdot 7^2),
\end{displaymath}

\begin{displaymath}
2205 = 2 (3^1 \cdot 5 \cdot 7^2) + 2 (5 \cdot 7^2) + 4(7^2) + 7^2,
\end{displaymath}

\begin{displaymath}
2205 = 2 (3^1 \cdot 5 \cdot 7^2) + 2 (5 \cdot 7^2) + 4(7^2) + 6(7) + 7.
\end{displaymath}

The result is:

\begin{displaymath}
2205 - 1 = 2 (3^1 \cdot 5 \cdot 7^2) + 2 (5 \cdot 7^2) + 4(7^2) + 6(7) + 6.
\end{displaymath}

Note that by adding $1$ to both sides of this equation, all steps will run in {\it reversed} order.
We explain that, if we think of $2,2,4,6,6$ as digits, as having a {\it carry}, and all
the digits that are carried to the next position above, will add up to the number $2205$,
which plays a role similar to $10^n$ in base 10. By analogy, if desired,
think of adding $1$ to $9999$ in base 10.

Again, the proof avoids any explicit use of these concepts, for simplicity.
$\lhd$

We expand the sum of divisors of $n$, as:

\begin{equation}
2n = \sum_{d|n} d
\end{equation}

then,

\begin{equation}
n - 1 = \lambda + \lambda_2 + \cdots + p_2p_1 + \cdots + p_1.
\end{equation}

We like to compare this form, with the result of the previous calculation for $n - 1$.

Given the uniqueness of the steps taken above, to obtain,

\begin{equation}
n - 1 = (p_1 - 1) \lambda + (p_1 - 1) \lambda_2 + \cdots + (p_i - 1)\lambda_j + \cdots + (p_m - 1)p_m + (p_m - 1),
\end{equation}

We realize,

\begin{equation}
(\sum_{\substack{d|n \\ d < p_m}} d)  - 1 = p_m - 1
\end{equation}

{\bf Note.} 
The sum runs over all divisors of $n$ (less than $p_m$), not all divisors of $p_m$.
$\lhd$

Otherwise, $\sum_{\substack{1 < d|n \\ d < p_m}} \not = p_m - 1$, and either the last remainder or
the next to the last remainder wouldn't be $p_m - 1$, which is a contradiction, if the above procedure is well-defined.

Similarly,

\begin{equation}
(\sum_{\substack{d|n \\ d < p_m^2}} d)  - 1 = p_m^2 - 1,
\label{U1}
\end{equation}

up to,

\begin{equation}
(\sum_{\substack{d|n \\ d < p_m^{a_m}}} d)  - 1 = p_m^{a_m} - 1.
\label{U2}
\end{equation}

We notice that if we add $1$ to both sides of $n - 1$ in

\begin{equation}
n - 1 = (p_1 - 1) \lambda + (p_1 - 1) \lambda_2 + \cdots + (p_i - 1)\lambda_j + \cdots + (p_m - 1)p_m + (p_m - 1),
\end{equation}

we get a cascading effect of sums, (a chain reaction, if you like) that runs backwards, and in each step we get a divisor of $n$,
from $p_m^1$ to $p_m^{a_m}$, then from $p_{(m-1)}^{1}p_m^{a_m}$ to $p_{(m-1)}^{a_{(m-1)}}p_m^{a_m}$, all the way back to $\lambda$. For each step, we can deduce an equality like \ref{U1} and \ref{U2}. The last of them being,

\begin{equation}
(\sum_{\substack{d|n \\ d < \lambda}} d)  - 1 = \lambda - 1.
\end{equation}

or,

\begin{equation}
\sum_{\substack{d|n \\ d < \lambda}} d = \lambda.
\end{equation}

which works just fine for the last reversed step (to be added to $(p_1 - 1)\lambda$ and get $n$), but comparing this result with,

\begin{equation}
n = \sum_{\substack{ d|n \\ d < n}} d
\end{equation}

and it's expanded form, we notice that we must account for $\lambda$ and everything else less than $\lambda$, in the
right-hand side.

{\bf Note.} 
We can't assume that every divisor $d$ of $n$ has the property $d | \lambda$. It holds for {\it some} divisors of $n$. (There are divisors with $p_1^{a_1} | d$, too.) But we know that for every divisor $d$ of $n$ we have $d < \lambda$,
except of course for $\lambda$ itself, and $n$.
$\lhd$

But that simply leads to,

\begin{equation}
n = \sum_{\substack{ d|n \\ d <n}} d = \lambda + \sum_{\substack{ d|n \\ d < \lambda}} d = 2\lambda
\end{equation}

Therefore $n$ is odd and divisible by 2.
From this contradiction we conclude that an odd number $n$ cannot be perfect. This completes the proof.
\end{proof}

(The proof could be represented as a direct argument, rather than an ad absurdum.)

\section{Conclusion}
We proved that perfect numbers are always even, and therefore, always related to Mersenne primes. Whether or not the set of Mersenne primes is infinite, is another interest\-ing open problem \cite{r3}.

\end{document}